\newtheorem{Theorem}{Theorem}[section]
\newtheorem{Proposition}{Proposition}[section]
\newtheorem{Lemma}{Lemma}[section]
\newtheorem{Corollary}{Corollary}[section]
\newtheorem{Definition}{Definition}[section]
\newtheorem{Remark}{Remark}[section]
\newtheorem{Notation}{Notation}
\newcommand{\w}{{\mathbf w}}
\newcommand{\R}{{\mathbb R}}
\newcommand{\N}{{\mathcal N}}
\newcommand{\x}{{\mathbf x}}
\journal{}
\begin{document}

\begin{frontmatter}



\title{Maximal Invariants For Lorentz Wishart Models}


\author{Emanuel Ben-David}

\address{Department of Statistics, Stanford University}

\begin{abstract}
In this paper we consider two statistical hypotheses for the families of Wishart type distributions. These distributions are analogs of the Wishart distributions defined and parametrized over a Lorentz cone. We test these hypotheses by means of maximal invariant statistics which are explicitly derived in the paper. The testing problems, respectively, concern the hypothesis that parameters are in a sub-Lorentz-cone, and the the hypothesis that two observations have the same parameter. 

\end{abstract}

\begin{keyword}Lorentz cones, Wishart distributions, maximal invariants, symmetric cones, Jordan algebras


\end{keyword}

\end{frontmatter}



\section{Introduction}
The primary aim of this paper is a detailed study of maximal invariant statistics for a family of Lorentz type Wishart distributions, or simply a Lorentz Wishart model.  A Lorentz type Wishart distribution, similar to the classical Wishart distribution \cite{Muirhead1982}, naturally arises as the distribution of the empirical sample covariance of a multivariate normal distribution \cite{Jensen1988}, or directly as the joint spectral density of observable variables \cite{Feinsilver2001}. Generally, by employing the theory of Euclidean simple algebras, the Wishart distributions can  be defined on each irreducible symmetric cone\cite{Casalis1996}, or more generally, on each homogeneous cone, by employing the theory of Vinberg algebras \cite{Andersson2004}.  According to the classification of irreducible (symmetric) cones \cite{Faraut1994} there are five types of irreducible cones. The first there types of irreducible cones are, respectively,  ${\mathrm PD}_{n}(\R), {\mathrm PD}_{n}(\mathbb{C})$ and ${\mathrm PD}_{n}(\mathbb{H})$, i.e., the cone of positive definite matrices over the field of real, complex and quaternion numbers. The forth type of irreducible cone is the Lorentz  cone (or Mikowski cone), and the fifth type is the exceptional cone over the Octonion $\mathbb{O}$.  Therefore, correspondingly, the three first types of Wishart distributions are real, complex and quaternion Wishart distributions which have been well studied in statistics and probability literature. In \cite{Jensen1988} Jensen considered statistical hypotheses for Lorentz Wishart models and obtain a complete solution to the problems of maximum likelihood inference. \\ 

\noindent
In this paper,\footnote{ This paper is a revised version of the second part of the author's doctoral thesis \cite{Bendavid2008}. In the first part, using a uniform approach, the maximal invariant statistics over irreducible cones are studied.} we explicitly derive maximal invariant statistics for testing two invariant statistical hypotheses for the family of Lorentz type Wishart distributions. The first hypothesis is studied in \cite{Jensen1988} too, but our approach is more general and it highlights the role of the maximal invariant statistic. The second hypothesis is an analog of Bartlett's test. \\
\noindent
The organization of this paper is as follows. In \S \ref{sec:pre} we give a precise definition of a Lorentz cone  $\mathcal{L}$, the description of the automorphism group $G$ of the Lorentz cone ${\mathcal{L}}$, and the definition of the Lorentz type Wishart distribution $\mathcal{W}_{\eta,\sigma}^{\mathcal{L}}$. In \S\ref{sec:test1} we test the hypothesis that the scale parameter $\sigma$ is in a Lorentz subcone
$\mathcal{L}_{0}\subset\mathcal{L}$.  To this end, first we identify a subgroup $G_{0}\subset G$ that acts transitively on $\mathcal{L}_{0}$ and derive a maximal invariant statistic associated with the hypothesis under the action of $G_{0}$.  In \S\ref{sec:bart} we test the hypothesis that two observed Lorentz type Wishart distributions have the same scale parameter $\sigma$. For this testing problem we derive a maximal invariant statistic associated with the hypothesis under the action of $G$.   
\section{Preliminaries}\label{sec:pre}
 

\subsection{The Lorentz cone}\label{sub:L_cone}
Let $W\not=\{0\}$ be a Euclidean vector space  and  let
\[
\Psi_W(\w,\w)=(\w,\w^\prime),\   \w,\w^\prime\in W,
\]
 denote the
inner product on  $W$. Set as usual $\|\w\|^2:=(\w,\w)$,  for each $\w\in W$. Consider the symmetric form  $\Psi$ on $\R\times W$ given by $\Psi((\lambda, \mathbf{w}),(\lambda, \mathbf{w})):=\lambda^2-\|w\|^{2}$.
This is a non-singular symmetric from with signature $ (1 ,{\mathrm Dim}(W))$. The symmetric form  $\Psi$ partitions according to the decomposition $\R\times W$ as

\[
\Psi=\left(\begin{array}{cc}
\psi_0&0\\
0&-\Psi_W
\end{array}\right)
\]
where $\psi_0\in\R_{+}$\footnote{we use the notations $\R_{++}$  and  $\R_{++}$, respectively,  for the set of non-negative and positive real numbers}.
The set
\[
\mathcal{L}:=\{(\lambda, \mathbf{w})\in\R\times W| \ \lambda>0,\Psi((\lambda, \mathbf{w}))>0\}
\]
is a symmetric  cone, called the Lorentz cone (generated by $W$).
\subsection{The automorphism group}\label{sub:aut}
\noindent
Define for any symmetric form  $\Psi$ on a Euclidean space $V$ the orthogonal group
\[
{\mathrm O}(\Psi):=
\left\{A\in{\mathrm GL}(V)| \ \Psi\circ(A\times A)=\Psi\right\},
\]
the special orthogonal group
\[
{\mathrm SO}(\Psi):= \left\{A\in{\mathrm O}(V)| \ {\mathrm det}(A)=1\right\}.
\] 
The connected component of the identity in ${\mathrm O}(\Psi)$, denoted by ${\mathrm SO}^{\shortuparrow}(\Psi)$, is a subgroup of ${\mathrm SO}(\Psi)$. If $\Psi$ is positive definite, and hence an inner product on $V$, then ${\mathrm SO}^{\shortuparrow}(\Psi)={\mathrm SO}(\Psi)$.

\bigskip
\noindent
Let $G_W= G$  denote the connected component of the automorphism group\  ${\mathrm Aut}(\mathcal{L})$. Then for  the symmetric form $\Psi$  defined in \S \ref{sub:L_cone} we have 
\[
G=\R_{++}\times {\mathrm SO}^{\shortuparrow}(\Psi),
\]
where 
\begin{equation}\label{S0+}
{\mathrm SO}^{\shortuparrow}(\Psi)= \left\{A=\left(\begin{array}{cc}
a_0&\mathbf{a}_{0W}\\
\mathbf{a}_{W0}&A_W
\end{array}\right)\in{\mathrm SO}(\Psi)| \ a_0>0\right\}
\end{equation}
with $\mathbf{a}_{0W}:W\rightarrow\R$, $\mathbf{a}_{W0}:\R\rightarrow W$ and $A_W:W\rightarrow W$
being linear mappings. Note that  $\R_{++}\times{\mathrm SO}^{\shortuparrow}$ acts transitively on $\mathcal{L}$ and for $(a,A)\in\R_{++}\times{\mathrm SO}^{\shortuparrow}(\R\times W)$ we have
\[
(a,A)(\lambda, \mathbf{w})=aA\left(\begin{array}{c}
\lambda\\
\w
\end{array}\right).
\]


 \subsection{The Lorentz type Wishart distributions }\label{sub:Wish_L}
 \noindent
Fix an element $e\in W$. Let $W_{1}$  be the orthogonal complement of  $\R e$, i.e., ${\R e}^{\perp}$. Thus $W=\R e\oplus W_{1}$ can be identified with  $\R\times W_{1}$. Under this identification, the Lorentz  cone $\mathcal{L}$  is isomorphic to the homogenous cone
 \[
 \mathcal{P}_2(W):=\{S=
                       \begin{pmatrix}
                         \lambda_1 & \w_{1} \\
                         \w_{1} & \lambda_2\\
                       \end{pmatrix}: \ \lambda_1, \lambda_2\in \R_{++},  \ \w_{1}\in W_{1}, \:\det(S):=\lambda_1\lambda_2-\|\w_{1}\|^2>0 \},
\]
studied in \cite{Andersson2004}.
The isomorphism  is given by
\begin{eqnarray}\label{eq:Andersson cone}
  \phi:\mathcal{P}_2(W) &\rightarrow& \mathcal{L}\nonumber\\
  \left(
    \begin{array}{cc}
      \lambda_1 & \w_{1} \\
      \w_{1} & \lambda_2 \\
    \end{array}
  \right)
  &\mapsto& (\dfrac{\lambda_1+\lambda_2}{2},(\dfrac{\lambda_1-\lambda_2}{2}, \w_{1})),
\end{eqnarray}
where the inverse mapping is  given by
\begin{eqnarray}\label{eq:Jensen cone}
 \phi^{-1}:\mathcal{L}&\rightarrow& \mathcal{P}_2(W)\nonumber\\
(\lambda_{1},(\lambda_{2},s))&\mapsto &\left(
                                \begin{array}{cc}
                                  \lambda_1+\lambda_2 & \w_{1} \\
                                  \w_{1} & \lambda_1-\lambda_2 \\
                                \end{array}
                              \right).
\end{eqnarray}
By using their general theory of the Wishart distributions for homogeneous cones,  in \cite{Andersson2004}  Andersson et al. derive the Wishart distribution  on $\mathcal{P}_2(V)$  as
  \begin{equation*}
  d\mathcal{W}_{\mathcal{P}_2(V)}( S| \eta,\Sigma)=\dfrac{\eta^{2\eta}{\mathrm det}(S)^{\eta-\frac{m+1}{2}}}
  {\pi^{\frac{m-1}{2}}\Gamma(\eta)\Gamma(\eta-\frac{m-1}{2}){\mathrm det}(\Sigma)^\eta}
  \exp\{-\eta
  {\mathrm tr}(\Sigma^{-1}S)\}\mathbf{1}_{\mathcal{P}_2(V)}(S)dS,
 \end{equation*}
 where  $\eta>\dfrac{m}{2}$  is  shape parameter, $m={\mathrm dim}_{\R}(W)$, and $\Sigma \in \mathcal{P}_2(W)$  is  the expectation  vector. Therefore, one natural way to define the Wishart distribution on $\mathcal{L}$ is via the image of $\mathcal{W}_{\mathcal{P}_2(V)}( S| \eta,\Sigma)$ under the mapping $\phi$. 
 \begin{Definition}
 The Lorentz type Wishart distribution, denoted by $\mathcal{W}^{\mathcal{L}}_{\eta, \sigma}$, is the image of the Wishart distribution $\mathcal{W}_{\mathcal{P}_2(V)}( S| \eta,\phi^{-1}(\sigma))$ under the mapping $\phi$ in Eq. \eqref{eq:Andersson cone}. One can check that the Wishart distribution on  $\mathcal{L}$ with shape parameter  $\eta>\dfrac{m-1}{2}$  and expectation   $\sigma=(\lambda, \mathbf{w})\in \mathcal{L}$  is given by
  \begin{equation}\label{Wishart}
        d\mathcal{W}^{\mathcal{L}}_{\eta,\sigma}(y,\mathbf{z})=\dfrac{(y^2-\|\mathbf{z}\|^2)^{\eta-\frac{m+1}{2}}}
  {k(m,\eta)(\lambda^2-\|\w\|^2)^\eta}
  \exp\{-2\eta(\frac{\lambda y-\w\cdot\mathbf{z}}{\lambda^2-\|\w\|^2})\}1_{\mathcal{L}}(y,\mathbf{z})dyd\mathbf{z},
\end{equation}
where  $(y,\mathbf{z})\in \R\times W$   and  $k(m,
\eta)=2\pi^{\frac{m-1}{2}}\Gamma(\eta)\Gamma(\eta-\frac{m-1}{2})\eta^{-2\eta}$.
\end{Definition}
\  $\eta=\frac{N}{4}$.
\begin{Remark}
First note that the density  given in Eq. \eqref{Wishart} is the same as formula $(29)$ in \cite{Jensen1988}. Moreover, for every irreducible symmetric cone $\Omega$, the Wishart distribution on $\Omega$, denoted by $\mathcal{W}_{\eta, \sigma}^{\Omega}$,  is well defined and given by the density
\begin{equation}\label{eq:wish_omega}
d\mathcal{W}_{\eta,\sigma}^{\Omega}(\x)=\dfrac{1}{2^{\frac{1}{2}\eta r}\Gamma_{\Omega}(\frac{\eta}{2})\det(\sigma)^{\frac{1}{2}\eta}}\exp\{-\frac{1}{2}tr(\sigma^{-1}\x)\}
\det(\x)^{\frac{1}{2}\eta-\frac{n}{r}}1_{\Omega}(\x),
 \end{equation}
where $r$ is the rank of $\Omega$, and $\Gamma_{\Omega}(\cdot)$ is the gamma function associated with $\Omega$ (see \cite{Letac2004} or \cite{Bendavid2010} for detail).  Since $\mathcal{L}$  is an irreducible symmetric cone of rank $2$, Eq. \eqref{eq:wish_omega} directly defines the Wishart distribution on $\mathcal{L}$.
\end{Remark}

 \section{Testing for scale parameter in a Lorentz subcone}\label{sec:test1}
 \noindent
  Suppose that shape parameter $\eta$ is known and consider the Lorentz Wishart model
  \begin{equation}\label{eq: Model}
  \mathcal{M}\equiv \left\{\mathcal{W}^{\mathcal{L}}_{\eta,\sigma}:\: \sigma \in \mathcal{L}\right\}.
  \end{equation}
   The standard theory of exponential families implies that the
\textbf{ML} estimator $\widehat{\sigma}_{mle}$ of $\sigma \in \mathcal{L}\in\mathcal{M}$  exists for any observation  $(y,\mathbf{z})\in \mathcal{L}$  and is given by
\begin{equation}
\widehat{\sigma}_{mle}((y,\mathbf{z}))=(y,\mathbf{z}).
\end{equation}
Now suppose  that $W_0\neq\{0\}$ is a subspace of $W$. Let $\mathcal{L}_{0}$ denote the Lorentz cone generated by $W_{0}$. Under the inclusion mapping   $\iota:\mathcal{L}_{0}\rightarrow \mathcal{L}$
the Lorentz Wishart model
\begin{equation}\label{eq:submodel}
\mathcal{M}_0 \equiv \left\{\mathcal{W}^{\mathcal{L}}_{\eta,\iota(\sigma_1)}:\:\sigma_1 \in \mathcal{L}_{0}\right\}
\end{equation}
 is a submodel of  $\mathcal{M}$. Let us consider the hypothesis 
\begin{equation}\label{eq:test1}
\tag{T1}H_0: \sigma\in \mathcal{L}_{0}\quad\text{vs.}\quad H: \sigma\in\mathcal{L}.
\end{equation}
 We will test the hypothesis \eqref{eq:test1} by a maximal invariant statistic we shall derive  in \S\ref{sub:test1}.  First we identify a subgroup $G_{0}\subset G$ such that the hypothesis is invariant under it. We proceed as follows.
\subsection{The subgroup of $G$ with invariant action on $\mathcal{L}_{0}$}\label{sub:subcone}
\noindent
Recall the definition of the symmetric form  $\Psi$  and the inner product $\Psi_{W}$  in \S \ref{sub:L_cone}. Let $W_0^\perp$ be the orthogonal complement of $W_0\subset W$, with respect to $\Psi_W$.  We set 
\[
\Psi_{W_0}:=\Psi_W|W_0\times W_0,\: \Psi_{W_0^\perp}:=\Psi_W|W^\perp_0\times W_0^\perp,\:\text{and} \: \Psi_0:=\Psi|(\R\times W_0)\times(\R\times W_0).
\]
By these conventions $\Psi$ partitions according to the decomposition $\R\times W_0\times W_0^\perp$ as
\[
\Psi=\left(\begin{array}{ccc}
\psi_0&0&0\\
0&-\Psi_{W_0}&0\\
0&0&-\Psi_{W_0^\perp}
\end{array}\right).
\]
Note that  $\Psi_0$ and  $\Psi_W$  partition with respect to the decompositions  $\R\times W_0$ and \  $W=W_0\times W_0^\perp$ accordingly as
\[
\Psi_0=\left(\begin{array}{cc}
\psi_0&0\\
0&-\Psi_{W_0}
\end{array}\right),\ \  \text{and}\ \ \Psi_W=\left(\begin{array}{cc}
\Psi_{W_0}&0\\
0&\Psi_{W_0^\perp}
\end{array}\right).
\]

\noindent
Since    $(\mu,\w_0)\in \mathcal{L}_{0}$ implies that   $\mu^2>\psi_{W_0}(\w)=\psi_{W}(\w)$, clearly,   $\mathcal{L}_{0}\subseteq\R\times W_0\subset\R\times W$ is a subcone of  $\mathcal{L}$, i.e.,  $\mathcal{L}_{0}\subseteq \mathcal{L}$.\\

\noindent
 Next we determine the group $G_{0}:=\{g\in G|g(\mathcal{L}_{0})\subseteq \mathcal{L}_{0}\}$.  Note that  
 \[
 \R_{++}\subset\R_{++}\times {\mathrm SO}^{\shortuparrow}(\Psi)
 \]
  is obviously contained in  $G_{0}$. 
 \begin{Proposition}\label{decomA}
Let \  $A\in{\mathrm SO}^{\shortuparrow}(\Psi)$.  Then \  $A(\mathcal{L}_{0})\subseteq \mathcal{L}_{0}$ if and only if $A$ partition as
\[
\left(\begin{array}{cc}
A_0&0\\
0&A_{W_0^\perp}
\end{array}\right)
\]
with

(i) \  $A_0\in{\mathrm O}(\Psi_0)$, i.e., \  $\Psi_0\circ(A_0\times A_0)=\Psi_0$,

(ii) \  $A_{W_0^\perp}\in{\mathrm O}(\Psi_{W_0^\perp})$, i.e., \  $\Psi_{W_0^\perp}\circ(A_{W_0^\perp}\times A_{W_0^\perp})=\Psi_{W_0^\perp}$,

(iii) \  $a_0>0$, and

(iv) \  ${\mathrm det}(A_{W_0})\cdot{\mathrm det}(A_{W_0^\perp})>0$.
\end{Proposition}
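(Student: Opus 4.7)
My plan is to prove the equivalence in two directions. For ``$\Leftarrow$'', the verification is routine once the block form is granted: for $(\mu,\w_0)\in\mathcal{L}_0$ one has $A(\mu,\w_0,0)=(A_0(\mu,\w_0),0)$; the invariance $\Psi_0(A_0(\mu,\w_0),A_0(\mu,\w_0))=\mu^2-\|\w_0\|^2>0$ follows from $A_0\in\mathrm{O}(\Psi_0)$, while positivity of the first coordinate $a_0\mu+\mathbf{a}_{0W_0}\w_0$ follows from Cauchy--Schwarz using the identity $\|\mathbf{a}_{0W_0}\|^2=a_0^2-1$ (immediate from $A_0\in\mathrm{O}(\Psi_0)$) together with $a_0>0$ and $\|\w_0\|<\mu$.

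For the forward direction, I first extract the block structure. Since $\mathcal{L}_0$ is open in $\R\times W_0$, it spans that subspace, so $A(\mathcal{L}_0)\subseteq\mathcal{L}_0$ forces $A(\R\times W_0)\subseteq\R\times W_0$; this kills the $W_0^\perp$-valued row of $A$. Next, I use $\Psi$-orthogonality of $\R\times W_0$ and $W_0^\perp$ together with $A\in\mathrm{O}(\Psi)$: for any $w\in W_0^\perp$ and any $v\in\R\times W_0$, $\Psi(Av,Aw)=\Psi(v,w)=0$, and since $A(\R\times W_0)=\R\times W_0$ and $\Psi$ is non-degenerate on $\R\times W_0$, this forces $Aw\in W_0^\perp$, killing the $W_0^\perp$-indexed column as well. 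Conditions (i) and (ii) are then just the restrictions of $A\in\mathrm{O}(\Psi)$ to the two invariant subspaces, and (iii) is built into the definition of $\mathrm{SO}^{\shortuparrow}(\Psi)$.

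I expect the main obstacle to be (iv), since the relation $\det A=\det A_0\cdot\det A_{W_0^\perp}=1$ directly controls only the sign of $\det A_0\cdot\det A_{W_0^\perp}$, whereas we need a sign statement about $\det A_{W_0}$ (the $W_0\to W_0$ sub-block of $A_0$). To bridge this gap I plan to use a Schur-complement computation. The identities $A_0^T\Psi_0 A_0=\Psi_0$ and $A_0\Psi_0 A_0^T=\Psi_0$ yield the block relations
\[
a_0^2=1+\|\mathbf{a}_{W_00}\|^2=1+\|\mathbf{a}_{0W_0}\|^2,\qquad a_0\,\mathbf{a}_{0W_0}^T=A_{W_0}^T\mathbf{a}_{W_00},
\]
together with $A_{W_0}A_{W_0}^T=I+\mathbf{a}_{W_00}\mathbf{a}_{W_00}^T$, whence $|\det A_{W_0}|=a_0>0$ and $A_{W_0}$ is invertible. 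Applying the Schur-complement formula and simplifying the scalar factor via the second identity above gives
\[
\det A_0=\det(A_{W_0})\bigl(a_0-\mathbf{a}_{0W_0}A_{W_0}^{-1}\mathbf{a}_{W_00}\bigr)=\frac{\det A_{W_0}}{a_0}.
\]
Combining with $\det A_0\cdot\det A_{W_0^\perp}=1$ then yields $\det A_{W_0}\cdot\det A_{W_0^\perp}=a_0>0$, establishing (iv) and completing the proof.
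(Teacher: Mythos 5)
Your proof is correct, and it is substantially more complete than the one in the paper. The paper's own argument covers only the forward direction, and even there only partially: it derives the vanishing of the block $A_{W_0^\perp 0W_0}$ from invariance of $\mathcal{L}_0$ and then writes down the three relations coming from $\Psi\circ(A\times A)=\Psi$, leaving the vanishing of the remaining off-diagonal block implicit (it follows from $\Psi_0\circ(A_0\times A_{0W_0^\perp})=0$ by non-degeneracy of $\Psi_0$ and surjectivity of $A_0$, but this is not said), saying nothing about condition (iv), and omitting the converse entirely. Your treatment of the block structure is essentially the paper's (spanning of $\R\times W_0$ by the open cone $\mathcal{L}_0$, then $\Psi$-orthogonality plus non-degeneracy of $\Psi_0$ to kill the other block), but you add the two genuinely missing pieces. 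For the converse, the Cauchy--Schwarz estimate $a_0\mu+\mathbf{a}_{0W_0}\w_0\geq a_0\mu-\sqrt{a_0^2-1}\,\|\w_0\|>0$ is exactly what is needed to see that $A_0$ preserves the positive sheet and not just the two-sheeted set $\{\Psi_0>0\}$. For (iv), your Schur-complement identity
\[
\det A_0=\det(A_{W_0})\bigl(a_0-\mathbf{a}_{0W_0}A_{W_0}^{-1}\mathbf{a}_{W_00}\bigr)=\frac{\det A_{W_0}}{a_0},
\]
obtained from the relations $a_0\mathbf{a}_{0W_0}=\mathbf{a}_{W_00}^{T}A_{W_0}$ and $a_0^2=1+\|\mathbf{a}_{W_00}\|^2$, combined with $\det A_0\cdot\det A_{W_0^\perp}=1$, is the cleanest way I know to get $\det A_{W_0}\cdot\det A_{W_0^\perp}=a_0>0$; nothing of this sort appears in the paper. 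The computations check out (in particular $|\det A_{W_0}|=a_0$ from $A_{W_0}A_{W_0}^T=I+\mathbf{a}_{W_00}\mathbf{a}_{W_00}^T$ guarantees invertibility of $A_{W_0}$, so the Schur complement is legitimate). In short: same skeleton as the paper where the paper has one, plus a correct and self-contained completion of the parts the paper skips.
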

\begin{proof}
  Suppose  $A\in{\mathrm SO}^{\shortuparrow}(\Psi)$ acts invariantly on $\mathcal{L}_{0}$, i.e., $A(\mathcal{L}_{0})\subseteq \mathcal{L}_{0}$. Let
\[
A=\left(\begin{array}{c|c}
\begin{array}{cc}
a_0&\mathbf{a}_{0W_0}\\
\mathbf{a}_{W_00}&A_{W_0}\\
\end{array}
&\begin{array}{c}
\mathbf{a}_{0W_0^\perp}\\
A_{W_0W_0^\perp}
\end{array}\\
\hline
\begin{array}{cc}
\mathbf{a}_{W_0^\perp 0}&A_{W_0^\perp W_0}
\end{array}&A_{W_0^\perp}
\end{array}\right)=\left(\begin{array}{cc}
A_0&\mathbf{a}_{0W_0^\perp}\\
A_{W_0^\perp0W_{0}}&A_{W_0^\perp}
\end{array}\right)
\]

and

\[
\Psi=\left(\begin{array}{cc}
\Psi_0&0\\
0&-\Psi_{W_0^\perp}
\end{array}\right)
\]
\noindent
be the partitions of   $A$,  $A$, and  $\Psi$  with respect to the decompositions   $\R\times W_0\times W_0^\perp$, \  $(\R\times W_0)\times W_0^\perp$,
and   $(\R\times W_0)\times W_0^\perp,$  respectively.  Note that 
\[
A_{W_{0}^{\perp}0W_{0}}=(\mathbf{a}_{W_{0}^{\perp}0}, A_{W_{0}^{\perp}W_{0}}): \R\times W_{0}\rightarrow W_{0}^{\perp},
\]
 and $A(\mathcal{L}_{0})\subseteq \mathcal{L}_{0}$ implies that   $A_{W_{0}^{\perp}0W_{0}}(\lambda,\w_0)=0$  for all   $(\lambda,\w_0)\in\R\times W_0$. Therefore  $A_{W_{0}^{\perp}0W_{0}}=0$. Since  $A\in{\mathrm SO}^{\shortuparrow}(\Psi)$  we also have   $\Psi\circ (A\times A)=\Psi,$, i.e.,
\[
\left(\begin{array}{cc}
\Psi_0&0\\
0&-\Psi_{W_0^\perp}
\end{array}\right)\circ\left[\left(\begin{array}{cc} 
A_0&A_{0W_0^\perp}\\
0&A_{W_0^\perp}
\end{array}\right)\times\left(\begin{array}{cc}
A_0&A_{0W_0^\perp}\\
0&A_{W_0^\perp}
\end{array}\right)\right]=\left(\begin{array}{cc}
\Psi_0&0\\
0&-\Psi_{W_0^\perp}
\end{array}\right).
\]

This is equivalent to the followings.

\[
\begin{array}{ccccc}
({\mathrm i}&\Psi_0\circ(A_0\times A_0)&=&\Psi_0.\\
({\mathrm ii})& \Psi_0\circ(A_0\times A_{W_0^\perp})&=&0.\\
({\mathrm iii})& \Psi_{W_0^\perp}\circ(A_{W_0^\perp}\times A_{W_0^\perp})&=&\Psi_{W_0^\perp}.
\end{array}
\]

\end{proof}
 
\subsection{A maximal invariant statistic for testing the hypothesis \label{sub:test1}}
\noindent
Now consider the action of  $G_{0}$  on   $\mathcal{L}$ , i.e., the restriction of the (transitive) action of $G$  on $\mathcal{L}$ to  $G_{0}$.  We define the statistic
\begin{eqnarray}\label{e:maxinv}
   \notag  \mathfrak{m}:\mathcal{L}:&\rightarrow&\R_{++}\\
    (\lambda, \w_0,\w_{0}^{\perp})&\mapsto&\dfrac{\Psi_{W_0^\perp}(\w_{0}^{\perp},\w_{0}^{\perp})}{\Psi_{0}((\lambda,\w_0), (\lambda,\w_0))},
    \end{eqnarray}
where  $(\lambda,\w_0,\w_{0}^{\perp})$ represents a typical element of $\R\times W_0\bigoplus W_{0}^\perp$.  
\begin{Proposition}\label{prop:max_inv1}
The mapping $\mathfrak{m}$ in Eq. \eqref{e:maxinv} is a faithful representation of the orbit projection   $\pi:\mathcal{L}\rightarrow \mathcal{L}/G_{0}$ and therefore a maximal invariant statistic. 
\end{Proposition}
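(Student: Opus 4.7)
The plan is to establish the two defining properties of a maximal invariant: $G_0$-invariance of $\mathfrak{m}$, and the implication that $\mathfrak{m}(x) = \mathfrak{m}(y)$ forces $x$ and $y$ into a common $G_0$-orbit. Since $G_0 = \R_{++} \times (G_0 \cap {\mathrm SO}^{\shortuparrow}(\Psi))$, I would verify invariance on each factor. A positive dilation by $a$ scales both $\Psi_{W_0^\perp}(\w_0^\perp, \w_0^\perp)$ and $\Psi_0((\lambda, \w_0), (\lambda, \w_0))$ by $a^2$, leaving the ratio unchanged. For any $A$ in $G_0 \cap {\mathrm SO}^{\shortuparrow}(\Psi)$, Proposition \ref{decomA} asserts that $A$ is block-diagonal with blocks $A_0 \in {\mathrm O}(\Psi_0)$ and $A_{W_0^\perp} \in {\mathrm O}(\Psi_{W_0^\perp})$, and these blocks preserve the two bilinear forms defining $\mathfrak{m}$ individually.

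For orbit separation, take $x = (\lambda, \w_0, \w_0^\perp)$ and $y = (\mu, \mathbf{v}_0, \mathbf{v}_0^\perp)$ in $\mathcal{L}$ with $\mathfrak{m}(x) = \mathfrak{m}(y)$, and first select the unique $a > 0$ satisfying $a^2 \Psi_0((\lambda, \w_0), (\lambda, \w_0)) = \Psi_0((\mu, \mathbf{v}_0), (\mu, \mathbf{v}_0))$; equality of the ratios then automatically yields $a^2 \Psi_{W_0^\perp}(\w_0^\perp, \w_0^\perp) = \Psi_{W_0^\perp}(\mathbf{v}_0^\perp, \mathbf{v}_0^\perp)$. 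The memberships $(\lambda, \w_0), (\mu, \mathbf{v}_0) \in \mathcal{L}_0$ are immediate from $\lambda^2 - \|\w_0\|^2 \geq \lambda^2 - \|\w_0\|^2 - \|\w_0^\perp\|^2 > 0$ together with $\lambda, \mu > 0$. Transitivity of the proper orthochronous Lorentz group ${\mathrm SO}^{\shortuparrow}(\Psi_0)$ on each positive-$\Psi_0$-norm sheet of $\mathcal{L}_0$ then produces $A_0 \in {\mathrm SO}^{\shortuparrow}(\Psi_0)$ with $A_0 (a\lambda, a\w_0) = (\mu, \mathbf{v}_0)$, and transitivity of the Euclidean orthogonal group ${\mathrm O}(\Psi_{W_0^\perp})$ on spheres produces $A_{W_0^\perp} \in {\mathrm O}(\Psi_{W_0^\perp})$ with $A_{W_0^\perp}(a\w_0^\perp) = \mathbf{v}_0^\perp$.

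The remaining and most delicate step is to ensure that the block-diagonal assembly of $A_0$ and $A_{W_0^\perp}$ actually lies in $G_0$; by Proposition \ref{decomA} this reduces to the determinant condition, equivalently $\det(A_0)\det(A_{W_0^\perp}) = 1$. If this identity already holds, nothing more is needed. Otherwise, I would repair the sign by composing $A_0$ on the left with a reflection $R_0 \in {\mathrm O}^{\shortuparrow}(\Psi_0)$ fixing $(\mu, \mathbf{v}_0)$ and satisfying $\det(R_0) = -1$; such a reflection lies in the stabilizer of $(\mu, \mathbf{v}_0)$, which is isomorphic to the Euclidean orthogonal group of the $\dim(W_0)$-dimensional $\Psi_0$-orthogonal complement of $(\mu, \mathbf{v}_0)$, and therefore exists since $W_0 \neq \{0\}$ guarantees $\dim(W_0) \geq 1$. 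The pair $(a, \mathrm{diag}(R_0 A_0, A_{W_0^\perp}))$ then lives in $G_0$ and carries $x$ to $y$. I expect this sign-correction step to be the only real subtlety; the rest follows mechanically from the transitivity facts and the block-decomposition of Proposition \ref{decomA}.
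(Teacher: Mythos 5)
Your proof is correct and follows essentially the same route as the paper's: invariance via the block decomposition of Proposition \ref{decomA} together with the homogeneity of the ratio, and orbit separation by matching the $\Psi_0$-norms with a dilation and then invoking transitivity of the orthochronous Lorentz group on the hyperboloid sheet and of the orthogonal group on spheres. Your explicit sign-repair via a reflection in the stabilizer of $(\mu,\mathbf{v}_0)$ is a welcome refinement: the paper simply picks $A_0\in{\mathrm SO}^{\shortuparrow}(\Psi_0)$ and $A_{W_0^\perp}\in{\mathrm SO}(\Psi_{W_0^\perp})$ and does not address the case $\dim(W_0^\perp)=1$, where ${\mathrm SO}(\Psi_{W_0^\perp})$ fails to act transitively on the unit sphere and your determinant correction is genuinely needed.
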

\begin{proof}
First for notational convenience set  $\|\w\|^2=\Psi_W(\w,\w),$   for  $\w\in W$. Thus
\[
\mathfrak{m}((\lambda, \w_0,\w_{0}^{\perp}))= \dfrac{\| \w_{0}^{\perp}\|^2}{\lambda^2-\| \w_0\|^2}, \ \forall (\lambda, \w_0,\w_{0}^{\perp})\in \mathcal{L}.
\]
 For each $(a,A)\in G_{0}$  by partitioning  $A$  as in Proposition \ref{decomA} we obtain
 \begin{eqnarray*}
       \mathfrak{m}((a,A)(\lambda, \w_0, \w_{0}^{\perp}))&=&\frac{\| aA_{W_0^\perp}\w_{0}^{\perp} \|^2}{(a\lambda)^2-\| aA_{W_0}\w_0 \|^2 }\\
                                   &=& \dfrac{\| A_{W_0^\perp}\w_{0}^{\perp} \|^2}{\lambda^2-\| A_{W_0}\w_0\ \|^2 }\\
                                   &=&\dfrac{\| \w_{0}^{\perp}\|^2}{\lambda^2-\| \w_0\|^2}.
 \end{eqnarray*}
This shows that $\mathfrak{m}$ is invariant under the action of  $G_{0}$  on  $\mathcal{L}$. 
Now suppose that
\[
\mathfrak{m}((\lambda,\w_0,\w_{0}^{\perp}))=\mathfrak{m}((\mu,\mathbf{u}_0,\mathbf{u}_0^{\perp})),
\]
 i.e.,
\[
  \frac{\| \w_{0}^{\perp}\|^2}{\lambda^2-\| \w_0\|^2}=\dfrac{\| \mathbf{u}_0^{\perp}\|^2}{\mu^2-\| \mathbf{u}_0\|^2}.
  \] 
  If $\mathbf{u}_0^{\perp}=0$ , then $\w_{0}^{\perp}=0$  and, since $G_{0}$ acts transitively on $\mathcal{L}_{0}$,  we can find   $(a,A)\in G_{0}$  such that   
  \[
  (a,A)(\lambda, \w_0,0)=(\mu,\mathbf{u}_0,0),
  \]
  which shows that $(\mu,\mathbf{u}_0,\mathbf{u}_0^{\perp})$ and  $(\mu,\w_0,\w_0^{\perp})$ are in the same $G_{0}$-orbit.\\
  \noindent
  Now assume  $\w_{0}^{\perp}\neq0$, and therefore$\mathbf{u}_0^{\perp}\neq 0$.  We set
  \[
  a:=\frac{\| \mathbf{u}_0^{\perp}\|}{\| \w_{0}^{\perp}\|}=\sqrt{\frac{\mu^2-\| \mathbf{u}_0\|^2}{\lambda^2-\| \w_0\|^2}}.
  \]
  Choose  $A_{0}\in SO^{+}(\Psi_0)$  and   $A_{W_0^\perp}\in SO(\Psi_{W_0^\perp})$  such that
  \[
  A_{0}(\frac{(\lambda,\w_0)}{\sqrt{\lambda^2-\| \w_0\|^2}})=\frac{(\mu,\mathbf{u}_0)}{\sqrt{\mu^2-\| \mathbf{u}_0\|^2}},
  \]
  and
   \[
   A_{W_0^\perp}(\frac{\w_{0}^{\perp}}{\| \w_{0}^{\perp}\|})=\frac{\mathbf{u}_0^{\perp}}{\| \mathbf{u}_0^{\perp}\|}.
   \]
  For $A:=\left(\begin{array}{cc}
A_0&0\\
0&A_{W_0^\perp}
\end{array}\right)$ we have $(a, A)\in G_{0}$  and
\begin{eqnarray*}
  (a, A)(\lambda,\w_0,\w_{0}^{\perp})&=&a(A_0(\lambda,\w_0),A_{W_0^\perp}(\w_{0}^{\perp}))  \\
   &=&a(\sqrt{\dfrac{\lambda^2-\| \w_0\|^2}{\mu^2-\| \mathbf{u}_0\|^2}}(\mu,\mathbf{u}_0),\dfrac{\| \w_{0}^{\perp}\|}{\| \mathbf{u}_0^{\perp} \|}\mathbf{u}_0^{\perp}) \\
  &=&a(a^{-1}(\mu,\mathbf{u}_0),a^{-1}\mathbf{u}_0^{\perp})\\
  &=&(\mu,\mathbf{u}_0,\mathbf{u}_0^{\perp}).
\end{eqnarray*}
\end{proof}
 \begin{Remark}\label{rem:projection}
 From statistical point of view,  it is more useful to write the maximal invariant statistics $\mathfrak{m}$ in Eq.  \eqref{e:maxinv} as
 \[\mathfrak{m}(\lambda, \mathbf{w})=\dfrac{\|\w-\mathfrak{p}(\w)\|^2}{\lambda^2-\|\mathfrak{p}(\w)\|^2}\quad \forall (\lambda, \mathbf{w})\in \mathcal{L},
 \]
 where  $\mathfrak{p}:W\rightarrow W_{0}$  is the orthogonal projection of  $W$ onto  $W_{0}$.
 \end{Remark}
\subsection{Testing the hypothesis \eqref{eq:test1}}
\noindent
Finally we are in the position to give a test statistic for testing the hypothesis \eqref{eq:test1}. First note that the hypothesis \eqref{eq:test1}  is invariant under $G_{0}$.
  \begin{Theorem}\label{prop:IND}
 Consider  $\mathcal{W}^{\mathcal{L}}_{\eta,\iota(\sigma)}$, the Wishart distribution on $\mathcal{L}$, where $\sigma\in \mathcal{L}_{0}$ and $\iota(\sigma)\in \mathcal{L}$ is the embedding
   of  $\sigma$ into   $\mathcal{L}$.  Let $t:\mathcal{L}\rightarrow \mathcal{L}_{0}$ be the mapping  $(y,\mathbf{z})\mapsto (y,\mathfrak{p}(\mathbf{z}))$, where  $(y,\mathbf{z})\in \mathcal{L}$. Then for the mapping 
 \begin{eqnarray*}
   (t,\mathfrak{m}):\mathcal{L} &\rightarrow& \mathcal{L}_{0}\times \R_{++} \\
   (y,\mathbf{z}) &\mapsto&((y,\mathfrak{p}(\mathbf{z})),\mathfrak{m}(y,\mathbf{z}))
   \end{eqnarray*}
   we have \  $(t,m)(\mathcal{W}^{\mathcal{L}}_{\eta,\iota(\sigma)})=\mathcal{W}^{\mathcal{L}_{0}}_{\eta,\sigma}\bigotimes
   \mathfrak{m}(\mathcal{W}^{\mathcal{L}}_{\eta,\iota(\sigma)})$.
      \end{Theorem}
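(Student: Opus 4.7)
The plan is to prove both assertions of the theorem simultaneously---the identification of the push-forward $t_{*}(\mathcal{W}^{\mathcal{L}}_{\eta,\iota(\sigma)})$ as $\mathcal{W}^{\mathcal{L}_{0}}_{\eta,\sigma}$, and the independence of $t$ and $\mathfrak{m}$ under $\mathcal{W}^{\mathcal{L}}_{\eta,\iota(\sigma)}$---by an explicit change of variables on the density \eqref{Wishart}. The whole argument is driven by the single structural fact that $\iota(\sigma)$ has zero component in $W_{0}^{\perp}$.

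First I would adopt coordinates adapted to the orthogonal decomposition $W=W_{0}\oplus W_{0}^{\perp}$, writing $\mathbf{z}=(\mathbf{z}_{0},\mathbf{z}_{0}^{\perp})$ with $\mathbf{z}_{0}=\mathfrak{p}(\mathbf{z})$ and $\iota(\sigma)=(\lambda,\w_{0},0)$ with $\w_{0}\in W_{0}$. Substituting into \eqref{Wishart}, the key observation is that the exponential term collapses to
\[
\exp\!\left\{-2\eta\,\frac{\lambda y-\w_{0}\cdot\mathbf{z}_{0}}{\lambda^{2}-\|\w_{0}\|^{2}}\right\},
\]
which depends only on $(y,\mathbf{z}_{0})$; dependence on $\mathbf{z}_{0}^{\perp}$ survives only through $\|\mathbf{z}_{0}^{\perp}\|^{2}$ in the factor $(y^{2}-\|\mathbf{z}_{0}\|^{2}-\|\mathbf{z}_{0}^{\perp}\|^{2})^{\eta-(m+1)/2}$.

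Next I would change variables on the $W_{0}^{\perp}$-factor. Setting $m_{0}=\dim W_{0}$ and $m_{1}=\dim W_{0}^{\perp}$, I pass to polar coordinates $\mathbf{z}_{0}^{\perp}=r\mathbf{u}$, $r\geq 0$, $\mathbf{u}\in S^{m_{1}-1}\subset W_{0}^{\perp}$, so $d\mathbf{z}_{0}^{\perp}=r^{m_{1}-1}\,dr\,d\mathbf{u}$. Writing $s=y^{2}-\|\mathbf{z}_{0}\|^{2}$ and substituting $t=r^{2}/s=\mathfrak{m}(y,\mathbf{z})$, a short computation gives
\[
r^{m_{1}-1}\,dr=\tfrac{1}{2}\,s^{m_{1}/2}\,t^{m_{1}/2-1}\,dt,\qquad (s-r^{2})^{\eta-(m+1)/2}=s^{\eta-(m+1)/2}(1-t)^{\eta-(m+1)/2}.
\]
Since $(m+1)/2-m_{1}/2=(m_{0}+1)/2$, the combined exponent of $s=y^{2}-\|\mathbf{z}_{0}\|^{2}$ collapses to $\eta-(m_{0}+1)/2$, which is exactly the determinant exponent of the Wishart density on the Lorentz cone $\mathcal{L}_{0}$ (of rank two in the Euclidean Jordan algebra $\R\times W_{0}$).

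Collecting the pieces, the joint density of $(y,\mathbf{z}_{0},t,\mathbf{u})$ factors into a product of three independent pieces: a function of $(y,\mathbf{z}_{0})$ proportional to $d\mathcal{W}^{\mathcal{L}_{0}}_{\eta,\sigma}(y,\mathbf{z}_{0})$; a function $t^{m_{1}/2-1}(1-t)^{\eta-(m+1)/2}$ on $(0,1)$ (a Beta density after normalization); and the uniform measure $d\mathbf{u}$ on $S^{m_{1}-1}$. Integrating out $\mathbf{u}$ yields a product measure on $\mathcal{L}_{0}\times\R_{++}$, which is the claim. The only nontrivial obstacle is constant bookkeeping: one must verify that the surface area $|S^{m_{1}-1}|=2\pi^{m_{1}/2}/\Gamma(m_{1}/2)$ together with the Beta normalizer $B(m_{1}/2,\eta-(m-1)/2)=\Gamma(m_{1}/2)\Gamma(\eta-(m-1)/2)/\Gamma(\eta-(m_{0}-1)/2)$ and the factor $\tfrac{1}{2}$ from the Jacobian combine with $k(m,\eta)$ to produce exactly $k(m_{0},\eta)$ in the $(y,\mathbf{z}_{0})$-marginal; this reduces to the explicit form $k(m,\eta)=2\pi^{(m-1)/2}\Gamma(\eta)\Gamma(\eta-(m-1)/2)\eta^{-2\eta}$ and a direct cancellation. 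Once this identity is verified, both the identification of the $t$-marginal and the independence of $t$ and $\mathfrak{m}$ follow at once.
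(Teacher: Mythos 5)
Your proof is correct, and the computations check out: the exponent bookkeeping $\eta-\tfrac{m+1}{2}+\tfrac{m_1}{2}=\eta-\tfrac{m_0+1}{2}$ is right, and the constant identity does hold, since
\[
\frac{1}{k(m,\eta)}\cdot\frac{\pi^{m_1/2}\,\Gamma\bigl(\eta-\tfrac{m-1}{2}\bigr)}{\Gamma\bigl(\eta-\tfrac{m_0-1}{2}\bigr)}=\frac{1}{2\pi^{(m_0-1)/2}\Gamma(\eta)\Gamma\bigl(\eta-\tfrac{m_0-1}{2}\bigr)\eta^{-2\eta}}=\frac{1}{k(m_0,\eta)}.
\]
Your route is, however, genuinely different from the proof the paper gives for this theorem. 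The paper factors the argument into two steps: it writes $\mathcal{W}^{\mathcal{L}}_{\eta,\iota(\sigma)}$ as a density with respect to the relatively invariant measure $d\nu(y,\mathbf{z})=(y^2-\|\mathbf{z}\|^2)^{\eta-\frac{m+1}{2}}dyd\mathbf{z}$, observes that this density is a function of $t(y,\mathbf{z})=(y,\mathfrak{p}(\mathbf{z}))$ alone (your ``single structural fact''), and then invokes Lemma 3 of Andersson--Br\o ns--Jensen to transfer the product structure of $(t,\mathfrak{m})(\nu)$ to $(t,\mathfrak{m})(\mathcal{W}^{\mathcal{L}}_{\eta,\iota(\sigma)})$; the explicit polar-coordinate computation is deferred to the proof of Corollary~3.1, where it is used to identify the second marginal as $\beta(\eta-\tfrac{m-1}{2},\tfrac{m_1}{2})$. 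You instead carry out that change of variables immediately and obtain the theorem and the corollary in one pass, which makes the argument self-contained (no appeal to the external factorization lemma) at the cost of obscuring the general principle that a density depending only on $t$ pushed through $(t,\mathfrak{m})$ against a factoring dominating measure automatically yields independence. Two cosmetic points: you use the letter $t$ both for the map $t:\mathcal{L}\to\mathcal{L}_0$ and for the substituted variable $t=r^2/s$ (which is $\mathfrak{m}$, not $t$); and it is worth saying explicitly that for fixed $(y,\mathbf{z}_0)\in\mathcal{L}_0$ the variable $\mathbf{z}_0^{\perp}$ ranges over the full ball of radius $\sqrt{s}$, so that $(y,\mathbf{z}_0)$ and the new variable range independently over $\mathcal{L}_0\times(0,1)$ and the factorization of the density really is a factorization of the measure.
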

   \begin{proof} First write $\mathcal{W}^{\mathcal{L}}_{\eta,\iota(\sigma)}$ as a density with respect to
   the  invariant measure
   \[
   d\nu(y,\mathbf{z})=(y^2-\|\mathbf{z}\|^2)^{\eta-\frac{m+1}{2}}dyd\mathbf{z},
   \]
    and rewrite the density in terms of  $t(y,\mathbf{z})$ and  $\mathfrak{m}(y,\mathbf{z})$. Then
   apply \cite[Lemma 3]{Andersson1983} to $\nu$ and state the transformation result.
   \end{proof}
 \begin{Corollary}\label{maxdist}  Let  $m_0:=\mathrm{dim_{\R}}(W_0)$ and  $m_1=m-m_0$. Then the transformed measure  $\mathfrak{m}(\mathcal{W}^{\mathcal{L}}_{\eta,\iota(\mathbf{\sigma}_1)})$   is  the beta distribution   $\beta(\eta-\frac{m-1}{2}, \frac{m_1}{2})$.
 \begin{proof}
 We start with writing \  $(t,m):\mathcal{L} \rightarrow \mathcal{L}_{0}\times
 \R_{++}$ as the composition of the mappings
 \[
 \mathcal{L}\rightarrow \R_{++}\times W_0\oplus W_0^\perp\rightarrow\R_{++}\times
 W_0\times \R_{++}
 \]
 \begin{eqnarray*}
 (y,\mathbf{z})&\mapsto& (y,\mathbf{z}_0, \mathbf{z}_1):=\left(y,\mathfrak{p}(\mathbf{z}),\frac{\mathbf{z}-\mathfrak{p}(\mathbf{z})}{\sqrt{y^2-\|\mathfrak{p}(\mathbf{z})\|^2}}\right)\\
  &\mapsto& (y,\mathbf{z}_0,u):= (y,\mathbf{z}_0,\|\mathbf{z}_1\|^2).
   \end{eqnarray*}
  Using these compositions we transfer the probability density of the Wishart distribution $\mathcal{W}^{\mathcal{L}}_{\eta,\iota(\sigma)}$ as follows:
\begin{eqnarray*}
   d\mathcal{W}^{\mathcal{L}}_{\eta,\iota(\sigma)}(y,\mathbf{z})&\rightarrow&
   \dfrac{k(m_0,\eta)}{k(m,\eta)}(1-\|\mathbf{z}_1\|^2)^
   {\eta-\frac{m+1}{2}}1_{\{\|\mathbf{z}_1\|<1\}}
   (\mathbf{z}_1)d\mathcal{W}^{\mathcal{L}_0}_{\eta,\sigma}(y,\mathbf{z}_0)d\mathbf{z}_1\\
   &\rightarrow&\dfrac{k(m_0,\eta)\pi^{\frac
   {m_0}{2}}}{k(m,\eta)
   \Gamma(\frac{m_1}{2})}(1-u)^
   {\eta-\frac{m+1}{2}}u^{\frac{m_1}{2}-1}
   1_{[0,1]}(u)d\mathcal{W}^{\mathcal{L}_0}_{\eta,\sigma}(y,\mathbf{z}_0)du\\
   &=&\dfrac{\Gamma(\eta-\frac{m_0-1}{2})}{\Gamma(\eta-\frac{m-1}{2})\Gamma(\frac{m_1}{2})}(1-u)^
   {\eta-\frac{m+1}{2}}u^{\frac{m_1}{2}-1}
   1_{[0,1]}(u)d\mathcal{W}^{\mathcal{L}_0}_{\eta,\sigma}(y,\mathbf{z}_0)du\\
   &=&d\mathcal{W}^{\mathcal{L}_0}_{\eta,\sigma}(y,\mathbf{z}_0)d\beta(\eta-\frac{m-1}{2}, \frac{m_1}{2})(u).
\end{eqnarray*}
Thus  $\mathfrak{m}(y,\mathbf{z})\sim \beta(\eta-\frac{m-1}{2}, \frac{m_1}{2})$.
 \end{proof}
 \end{Corollary}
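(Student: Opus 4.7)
The plan is to exploit Theorem 3.1: once we know that $(t,\mathfrak{m})_*\mathcal{W}^{\mathcal{L}}_{\eta,\iota(\sigma)}$ factors as a product, it suffices to make an explicit change of variables and read off the marginal of $\mathfrak{m}$. The natural coordinate system decomposes $\mathbf{z}\in W$ as $\mathbf{z}=\mathbf{z}_0+\mathbf{z}_0^\perp$ with $\mathbf{z}_0=\mathfrak{p}(\mathbf{z})\in W_0$ and $\mathbf{z}_0^\perp\in W_0^\perp$, so that $\|\mathbf{z}\|^2=\|\mathbf{z}_0\|^2+\|\mathbf{z}_0^\perp\|^2$.

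The first step is to observe that, because $\sigma=\iota(\sigma_1)$ has zero $W_0^\perp$-component, the inner product $\mathbf{w}\cdot\mathbf{z}$ appearing in the exponent of the Wishart density in Eq.~\eqref{Wishart} reduces to $\mathbf{w}_0\cdot\mathbf{z}_0$. Likewise $\lambda^2-\|\mathbf{w}\|^2=\lambda^2-\|\mathbf{w}_0\|^2$. So the only place where $\mathbf{z}_0^\perp$ enters the density is through the factor $(y^2-\|\mathbf{z}\|^2)^{\eta-(m+1)/2}$. The key algebraic identity is
\[
y^2-\|\mathbf{z}\|^2 \;=\; (y^2-\|\mathbf{z}_0\|^2)-\|\mathbf{z}_0^\perp\|^2 \;=\; (y^2-\|\mathbf{z}_0\|^2)\bigl(1-u\bigr),
\]
where $u:=\|\mathbf{z}_0^\perp\|^2/(y^2-\|\mathbf{z}_0\|^2)=\mathfrak{m}(y,\mathbf{z})$, which is in $[0,1)$ precisely because $(y,\mathbf{z})\in\mathcal{L}$.

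Next I would pass to polar coordinates on $W_0^\perp\cong\R^{m_1}$: write $d\mathbf{z}_0^\perp=r^{m_1-1}\,dr\,d\omega$ where $d\omega$ is the surface measure on $S^{m_1-1}$, whose total mass is $2\pi^{m_1/2}/\Gamma(m_1/2)$. The integrand is rotationally symmetric in $\mathbf{z}_0^\perp$, so the angular integration contributes only this constant. Then substitute $u=r^2/(y^2-\|\mathbf{z}_0\|^2)$, for which $r^{m_1-1}\,dr=\tfrac12(y^2-\|\mathbf{z}_0\|^2)^{m_1/2}u^{m_1/2-1}\,du$. Collecting the powers of $y^2-\|\mathbf{z}_0\|^2$ produces exactly the exponent $\eta-(m_0+1)/2$ needed to form the Wishart density on $\mathcal{L}_0$ from Eq.~\eqref{Wishart} applied with $m_0$ in place of $m$, while the $u$-factors combine to $u^{m_1/2-1}(1-u)^{\eta-(m+1)/2}$.

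The last step is bookkeeping of the normalizing constants. The ratio $k(m_0,\eta)/k(m,\eta)$ together with the factors $2\pi^{m_1/2}/\Gamma(m_1/2)$ and $\tfrac12$ picked up from the polar/quadratic substitution must combine to the beta normalization $\Gamma(\eta-(m-1)/2+m_1/2)/(\Gamma(\eta-(m-1)/2)\Gamma(m_1/2))$; using the definition $k(m,\eta)=2\pi^{(m-1)/2}\Gamma(\eta)\Gamma(\eta-(m-1)/2)\eta^{-2\eta}$ and the identity $m-m_0=m_1$ this reduces to a direct cancellation. The main obstacle is not conceptual but bookkeeping: one must be careful to track every power of $(y^2-\|\mathbf{z}_0\|^2)$ and every gamma factor so that the $(y,\mathbf{z}_0)$-marginal is recognized as the Wishart $\mathcal{W}^{\mathcal{L}_0}_{\eta,\sigma}$ (rather than something off by a $\Gamma$-factor) and the $u$-marginal is read off as $\beta(\eta-(m-1)/2,\,m_1/2)$, consistent with the product structure asserted in Theorem 3.1.
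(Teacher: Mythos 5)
Your proposal is correct and follows essentially the same route as the paper: decompose $\mathbf{z}=\mathfrak{p}(\mathbf{z})+\mathbf{z}_0^\perp$, factor $y^2-\|\mathbf{z}\|^2=(y^2-\|\mathfrak{p}(\mathbf{z})\|^2)(1-u)$, integrate out the direction of $\mathbf{z}_0^\perp$, and match the constants $k(m_0,\eta)/k(m,\eta)$ with the sphere volume to recover the beta normalization. The only cosmetic difference is that the paper first passes through the normalized variable $\mathbf{z}_1=\mathbf{z}_0^\perp/\sqrt{y^2-\|\mathfrak{p}(\mathbf{z})\|^2}$ before setting $u=\|\mathbf{z}_1\|^2$, whereas you substitute directly in polar coordinates; the bookkeeping is identical.
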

 \begin{Proposition}\label{prop:test1}
 The likelihood ratio \textbf{ LR} statistic   $Q$  for hypothesis \eqref{eq:test1} is given by
 \begin{equation}\label{eq:LR}
    Q=(\dfrac{y^2-\|\mathbf{z}\|^2}{y^2-\|\mathfrak{p}(\mathbf{z})\|^2})^\eta=\mathfrak{m}(y,\mathbf{z})^{\eta}.
\end{equation}
Moreover,  $Q$  is independent $\mathcal{W}^{\mathcal{L}_0}_{\eta,\sigma}$ and  $ Q^{\frac{1}{\eta}}\sim \beta(\eta-\frac{m-1}{2},\frac{m-m_0}{2})$.
\end{Proposition}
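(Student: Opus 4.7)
The plan is to compute the likelihood ratio explicitly by maximising the density in Eq.~\eqref{Wishart} over $\mathcal{L}$ and over $\mathcal{L}_0$, then read both the distribution of $Q$ and the claimed independence off Theorem~\ref{prop:IND} and Corollary~\ref{maxdist}.

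First, I use the identification of the unconstrained MLE $\widehat{\sigma}_{mle} = (y,\mathbf{z})$ already recorded at the start of Section~\ref{sec:test1}. For the constrained MLE on $\mathcal{L}_0$, the key observation is that for $\sigma = (\lambda,\mathbf{w}_0)$ with $\mathbf{w}_0 \in W_0$ one has $\mathbf{w}_0 \cdot \mathbf{z} = \mathbf{w}_0 \cdot \mathfrak{p}(\mathbf{z})$, so the likelihood restricted to $\mathcal{L}_0$ depends on the data only through $t(y,\mathbf{z}) = (y,\mathfrak{p}(\mathbf{z}))$, which lies in $\mathcal{L}_0$ because $y^2 > \|\mathbf{z}\|^2 \geq \|\mathfrak{p}(\mathbf{z})\|^2$. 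Applying the unconstrained MLE statement inside the Wishart model on $\mathcal{L}_0$ then forces $\widehat{\sigma}_0 = (y,\mathfrak{p}(\mathbf{z}))$.

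Next, I substitute both MLEs into Eq.~\eqref{Wishart}. At each MLE the exponent $2\eta(\lambda y - \mathbf{w}\cdot\mathbf{z})/(\lambda^2 - \|\mathbf{w}\|^2)$ reduces to $2\eta$, so the exponentials cancel in the ratio of suprema, and what remains is just the ratio of the $(\lambda^2 - \|\mathbf{w}\|^2)^{-\eta}$ factors. The Pythagorean identity $\|\mathbf{z}\|^2 = \|\mathfrak{p}(\mathbf{z})\|^2 + \|\mathbf{z} - \mathfrak{p}(\mathbf{z})\|^2$ then lets me rewrite this quotient in the form \eqref{eq:LR} in terms of $\mathfrak{m}$.

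For the distributional part, Theorem~\ref{prop:IND} decomposes $(t,\mathfrak{m})_*\mathcal{W}^{\mathcal{L}}_{\eta,\iota(\sigma)}$ as the product $\mathcal{W}^{\mathcal{L}_0}_{\eta,\sigma} \otimes \mathfrak{m}_*\mathcal{W}^{\mathcal{L}}_{\eta,\iota(\sigma)}$. Since $Q$ is a deterministic function of $\mathfrak{m}(y,\mathbf{z})$ alone and $t(y,\mathbf{z})$ has law $\mathcal{W}^{\mathcal{L}_0}_{\eta,\sigma}$, this product structure immediately yields the independence of $Q$ and the $\mathcal{W}^{\mathcal{L}_0}_{\eta,\sigma}$-component, and Corollary~\ref{maxdist} transfers the beta law from $\mathfrak{m}$ to $Q^{1/\eta}$. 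I don't foresee a real obstacle here; the only step that requires a moment's care is the identification of the constrained MLE, which is forced by the factorisation of the exponential family once one notices that $\mathbf{z}$ enters the restricted likelihood only through $\mathfrak{p}(\mathbf{z})$.
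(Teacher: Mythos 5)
Your route is the same as the paper's: the paper's proof consists precisely of identifying the constrained MLE as $(y,\mathfrak{p}(\mathbf{z}))$, forming the ratio of maximized likelihoods, and then citing Theorem~\ref{prop:IND} and Corollary~\ref{maxdist} for the independence and the beta law. You fill in the details it omits, and you do so correctly: the reduction of the restricted likelihood to a function of $(y,\mathfrak{p}(\mathbf{z}))$ via $\mathbf{w}_0\cdot\mathbf{z}=\mathbf{w}_0\cdot\mathfrak{p}(\mathbf{z})$, the cancellation of the exponentials because the exponent equals $2\eta$ at both MLEs (using $\mathfrak{p}(\mathbf{z})\cdot\mathbf{z}=\|\mathfrak{p}(\mathbf{z})\|^2$), and the use of the product structure in Theorem~\ref{prop:IND} to get independence of $Q$ from $t(y,\mathbf{z})$ are all sound.

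The one step that does not survive being written out is your claim that the Pythagorean identity lets you ``rewrite this quotient in the form \eqref{eq:LR} in terms of $\mathfrak{m}$.'' Carrying out that computation gives
\[
\frac{y^2-\|\mathbf{z}\|^2}{y^2-\|\mathfrak{p}(\mathbf{z})\|^2}
=\frac{y^2-\|\mathfrak{p}(\mathbf{z})\|^2-\|\mathbf{z}-\mathfrak{p}(\mathbf{z})\|^2}{y^2-\|\mathfrak{p}(\mathbf{z})\|^2}
=1-\mathfrak{m}(y,\mathbf{z}),
\]
so $Q=(1-\mathfrak{m}(y,\mathbf{z}))^{\eta}$, not $\mathfrak{m}(y,\mathbf{z})^{\eta}$; the second equality in \eqref{eq:LR} as printed is a slip in the paper, and you have reproduced it rather than derived it. This matters for your last step, ``Corollary~\ref{maxdist} transfers the beta law from $\mathfrak{m}$ to $Q^{1/\eta}$'': the density displayed in the proof of Corollary~\ref{maxdist} is proportional to $u^{\frac{m_1}{2}-1}(1-u)^{\eta-\frac{m+1}{2}}$ with $u=\mathfrak{m}(y,\mathbf{z})$, which in the standard parametrization means $\mathfrak{m}\sim\beta(\frac{m_1}{2},\eta-\frac{m-1}{2})$ and hence $Q^{1/\eta}=1-\mathfrak{m}\sim\beta(\eta-\frac{m-1}{2},\frac{m-m_0}{2})$. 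So the final distributional claim of the Proposition is correct, but only after the substitution $u\mapsto 1-u$ that your transfer step silently skips; make that swap explicit (or flag the misprint in \eqref{eq:LR}) and the argument is complete.
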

\begin{proof}
  First note that for the submodel \eqref{eq:submodel} and  the observation    $(y,\mathbf{z}),$  the \textbf{ML} estimator of \  $\sigma \in \mathcal{L}_{0}$ becomes
\[
\widehat{\sigma}_1((y,\mathbf{z}))=(y,\mathfrak{p}(\mathbf{z})),
\]
where  $\mathfrak{p}$ is the orthogonal projection on the subspace  $W_0$. Thus {\bf LR} is given by Eq. \eqref{eq:LR}. The rest of the proof are direct consequences of Proposition \ref{prop:IND} and  Corollary \ref{maxdist}.
\end{proof}
\noindent
Note that the Proposition \ref{prop:test1} reduces to Theorem 5 in \cite{Jensen1988} for choice of  $\eta=N/4$.
 \section{The Bartlett's test for Lorentz Wishart Models}\label{sec:bart}
 \noindent
 As before, suppose $\eta $  is known, and  consider the statistical model
 \[
 \left\{\mathcal{W}^{\mathcal{L}}_{\eta,\sigma_1}\otimes \mathcal{W}^{\mathcal{L}}_{\eta,\sigma_2}:\: (\sigma_1,\sigma_2)\in \mathcal{L}\times \mathcal{L}\right\}
  \]
  and its submodel 
  \[
    \left\{\mathcal{W}^{\mathcal{L}}_{\eta,\sigma}\otimes \mathcal{W}^{\mathcal{L}}_{\eta, \sigma}:\: \sigma\in \mathcal{L}\right\}.
  \]
  Consider the hypothesis 
  \begin{equation}\label{eq:test2}
 \tag{T2}    H_0 : \sigma_1=\sigma_2=\sigma \quad   \text{vs.}\quad   H: \sigma_1\neq\sigma_2.
  \end{equation}
  
 \subsection{A maximal invariant statistic associated with the hypothesis \eqref{eq:test2}}
 \noindent
 Note that $\mathcal{L}$ can be considered a subcone of $\mathcal{L}\times \mathcal{L}$, via diagonal
 embedding. Also the action of   $G$  on $\mathcal{L}$ can be, canonically, extended to an action on  $\mathcal{L}\times \mathcal{L}$ given by
\begin{equation}\label{e:action}
g(\sigma_1,\sigma_2)=(g\sigma_1,g\sigma_2) \qquad \forall g\in G, \;\forall(\sigma_1,\sigma_2)\in \mathcal{L}\times\mathcal{L}.
 \end{equation}
 Under this consideration the hypothesis \eqref{eq:test2} is invariant under $G$, and therefore a maximal invariant statistic is desired. To obtain a maximal invariant we proceed with the following lemma.
   \begin{Lemma}\label{lemma} For every  $\sigma_1$ and  $\sigma_2 \in \mathcal{L}$
    \begin{eqnarray}
    \Psi(\sigma_1,\sigma_2)\geq
    \sqrt{\Psi(\sigma_1,\sigma_1)\Psi(\sigma_2,\sigma_2)}\,,
    \end{eqnarray}
    and the equality holds if and only if   $g\sigma_1=\sigma_2$  for some  $g\in G$.
    \end{Lemma}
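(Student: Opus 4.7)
The plan is to exploit the $G$-invariance of both sides of the inequality and then reduce to a trivial one-line computation. For any $g=(a,A)\in G=\R_{++}\times{\mathrm SO}^{\shortuparrow}(\Psi)$, the action is $g\sigma=aA\sigma$, and since $A\in{\mathrm SO}^{\shortuparrow}(\Psi)$ preserves $\Psi$, I get
\[
\Psi(g\sigma_1,g\sigma_2)=a^2\Psi(\sigma_1,\sigma_2),\qquad \Psi(g\sigma_i,g\sigma_i)=a^2\Psi(\sigma_i,\sigma_i).
\]
Thus both sides scale by the common factor $a^2$, so replacing $(\sigma_1,\sigma_2)$ by $(g\sigma_1,g\sigma_2)$ does not affect the truth of either the inequality or the equality case.

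Next, I will use the transitivity of $G$ on $\mathcal{L}$ (recorded in \S\ref{sub:aut}) to choose $g\in G$ with $g\sigma_1=e_0:=(1,\mathbf{0})\in\R\times W$. Writing $g\sigma_2=(\mu,\mathbf{u})\in\mathcal{L}$ with $\mu>0$ and $\mu^2-\|\mathbf{u}\|^2>0$, one computes
\[
\Psi(e_0,g\sigma_2)=\mu\qquad\text{and}\qquad\sqrt{\Psi(e_0,e_0)\Psi(g\sigma_2,g\sigma_2)}=\sqrt{\mu^2-\|\mathbf{u}\|^2},
\]
so the desired inequality reduces to $\mu\ge\sqrt{\mu^2-\|\mathbf{u}\|^2}$, which is immediate from $\|\mathbf{u}\|^2\ge 0$. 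Equality in this reduced form holds if and only if $\mathbf{u}=\mathbf{0}$, equivalently $g\sigma_2=\mu\,e_0=\mu\,(g\sigma_1)$.

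For the equality direction of the lemma, I will translate the condition $g\sigma_2=\mu(g\sigma_1)$ back through $g$: since scalar multiplication by $\mu\in\R_{++}$ commutes with the linear action of $g$, this is the same as $\sigma_2=\mu\sigma_1$, which exhibits an element $h=(\mu,\mathrm{Id})\in\R_{++}\subset G$ with $h\sigma_1=\sigma_2$. Conversely, if $\sigma_2=h\sigma_1$ for some $h\in G$, then by the invariance step both sides of the inequality coincide, giving equality.

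Nothing in this route requires real work beyond invoking the transitive action of $G$ and the $\Psi$-preservation of ${\mathrm SO}^{\shortuparrow}(\Psi)$; the only mild obstacle is bookkeeping the reduction so that the equality characterization is phrased in a $G$-invariant way rather than depending on the specific normalizing element chosen. An alternative, purely algebraic route would square the inequality, rearrange it into
\[
\|\lambda_1\mathbf{w}_2-\lambda_2\mathbf{w}_1\|^2\;\ge\;\|\mathbf{w}_1\|^2\|\mathbf{w}_2\|^2-(\mathbf{w}_1,\mathbf{w}_2)^2,
\]
and finish by AM-GM together with the cone condition $\lambda_i>\|\mathbf{w}_i\|$; but this loses the conceptual transparency of the invariance argument above, so I would prefer the group-theoretic approach.
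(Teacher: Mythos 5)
Your treatment of the inequality itself is correct and takes a genuinely different route from the paper's. The paper works entirely in coordinates: writing $\sigma_1=(\lambda,\w)$, $\sigma_2=(\mu,\mathbf{u})$, it expands $(\sigma_1\cdot\sigma_2)^2-\|\sigma_1\|^2\|\sigma_2\|^2$, bounds it below by $(\lambda\|\mathbf{u}\|-\mu\|\w\|)^2$ via the Cauchy--Schwarz inequality on $W$ combined with the cone conditions $\lambda>\|\w\|$, $\mu>\|\mathbf{u}\|$, and reads the equality case off that computation --- essentially the ``purely algebraic route'' you sketch at the end and set aside. Your preferred route instead uses that both sides scale by $a^2$ under $(a,A)\in\R_{++}\times\mathrm{SO}^{\shortuparrow}(\Psi)$ and that $G$ acts transitively, normalizes $\sigma_1$ to $(1,\mathbf{0})$, and collapses the inequality to $\mu\ge\sqrt{\mu^2-\|\mathbf{u}\|^2}$. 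This buys real simplicity: it avoids the delicate step in the paper's chain of estimates (which silently needs $2\lambda\mu>\|\w\|\,\|\mathbf{u}\|+\w\cdot\mathbf{u}$), it sidesteps the sign check required when unsquaring, and it makes the equality case transparent, namely $\mathbf{u}=\mathbf{0}$ after normalization, i.e.\ $\sigma_2\in\R_{++}\sigma_1$.

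The one genuine defect is your converse for the equality case. You claim that if $\sigma_2=h\sigma_1$ for some $h\in G$ then ``by the invariance step both sides coincide.'' But the invariance step compares $\Psi(h\sigma_1,h\sigma_2)$ with $\Psi(\sigma_1,\sigma_2)$ --- it transforms \emph{both} arguments --- and says nothing about $\Psi(\sigma_1,h\sigma_1)$ versus $\Psi(\sigma_1,\sigma_1)$. Since $G$ acts transitively on $\mathcal{L}$, \emph{every} $\sigma_2$ is of the form $h\sigma_1$, so your converse as written would force equality for all pairs, which is false (take $\sigma_1=(1,\mathbf{0})$, $\sigma_2=(2,\mathbf{u})$ with $\|\mathbf{u}\|=1$: then $\Psi(\sigma_1,\sigma_2)=2>\sqrt{3}$). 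The same observation shows the lemma's own phrasing ``$g\sigma_1=\sigma_2$ for some $g\in G$'' is vacuous as stated; the condition that your normalization actually characterizes, and the one used later when equality corresponds to $\xi_1=\xi_2$ in Proposition \ref{prop:max_inv2}, is $\sigma_2=c\,\sigma_1$ for some $c\in\R_{++}$, i.e.\ $g$ must lie in the scalar subgroup $\R_{++}\subset G$. Restricting the converse to such $g$ makes it a one-line verification, $\Psi(\sigma_1,c\sigma_1)=c\|\sigma_1\|^2=\sqrt{\|\sigma_1\|^2\cdot c^2\|\sigma_1\|^2}$, and completes your proof. (For what it is worth, the paper's own equality discussion is also abbreviated: it records only the necessary condition $\lambda\|\mathbf{u}\|=\mu\|\w\|$ and omits that $\w$ and $\mathbf{u}$ must in addition be positively proportional.)
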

    \begin{proof}
    Let \  $\sigma_1\cdot\sigma_2:=\Psi(\sigma_1,\sigma_2)$, where
    \  $\sigma_1=(\lambda, \mathbf{w})$ and \  $\sigma_2=(\mu,\mathbf{u})$. Then
    \begin{eqnarray*}
      (\sigma_1\cdot \sigma_2)^2-\|\sigma_1\|^2\|\sigma_2\|^2&=&
      (\lambda\mu-\w.\mathbf{u})^2-(\lambda^2-\|\w\|^2)(\mu^2-\|\mathbf{u}\|^2)\\
      &\geq&\lambda^2\|\mathbf{u}\|^2-2\lambda\mu\|\mathbf{u}\|\|\w\|+\mu^2\|\w\|^2\\
      &=&(\lambda\|\mathbf{u}\|-\mu\|\w\|)^2\geq 0.
    \end{eqnarray*}
    The equality holds if and only if \  $\lambda\|\mathbf{u}\|=\mu\|\w\|$ which
    is equivalent to \  $\|\sigma_2\|=\dfrac{\mu}{\lambda}\|\sigma_1\|$.

    \end{proof}
    \begin{Notation}
   In the remainder of this paper for brevity we use the notation   $\sigma_1\cdot\sigma_2$  for the quadratic product  $\Psi(\sigma_1,\sigma_2)$.
   \end{Notation}
 \begin{Proposition} \label{prop:max_inv2}
 The mapping $\pi:\mathcal{L}\times \mathcal{L}\rightarrow\R_{++}^2$\ with \  $(\sigma_1, \sigma_2)\mapsto (\xi_1, \xi_2)$, where

    \begin{eqnarray*}
    \xi_1&:=&\dfrac{\sigma_1\cdot\sigma_2+
    \sqrt{(\sigma_1\cdot\sigma_2)^2-\|\sigma_1\|^2
    \|\sigma_2\|^2}}{\|\sigma_1\|^2} \quad \text{and}\\
    \xi_2&:=&\dfrac{\sigma_1\cdot \sigma_2-\sqrt{(\sigma_1\cdot \sigma_2)^2-\|\sigma_1\|^2
    \|\sigma_2\|^2}}{\|\sigma_1\|^2},
    \end{eqnarray*}
    is  maximal invariant under the action of $G$, defined in  Eq. \eqref{e:action}, and $\xi_1$,  $\xi_2$ are eigenvalues of  $\sigma_2$  with respect to  $\sigma_1$.
    \end{Proposition}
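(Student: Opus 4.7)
The plan is to split the proof into three parts: first verify that $\xi_1, \xi_2$ are $G$-invariant, next verify that they are eigenvalues of $\sigma_2$ with respect to $\sigma_1$ in the rank-two Jordan algebra sense, and finally verify maximality by showing that equality of $(\xi_1, \xi_2)$ on two pairs forces those pairs to lie in the same $G$-orbit.

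For invariance, I would observe that $G = \R_{++} \times \mathrm{SO}^{\shortuparrow}(\Psi)$ acts by $(a,A)\sigma = aA\sigma$, and since $\mathrm{SO}^{\shortuparrow}(\Psi)$ preserves $\Psi$ by definition, both $\sigma_1 \cdot \sigma_2$ and $\|\sigma_i\|^2 = \Psi(\sigma_i,\sigma_i)$ transform by multiplication by $a^2$ under such $(a,A)$. Therefore every quotient appearing in the formulas for $\xi_1, \xi_2$ is homogeneous of degree zero, and invariance is immediate. The discriminant $(\sigma_1\cdot\sigma_2)^2 - \|\sigma_1\|^2\|\sigma_2\|^2$ is nonnegative by Lemma~\ref{lemma}, so $\xi_1, \xi_2$ are real and positive.

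For the eigenvalue interpretation, I would expand $\Psi(\sigma_2 - \xi\sigma_1,\sigma_2 - \xi\sigma_1)$ by bilinearity to obtain the quadratic
\[
\|\sigma_1\|^2 \xi^2 - 2(\sigma_1 \cdot \sigma_2)\,\xi + \|\sigma_2\|^2 = 0,
\]
which is the generalized characteristic equation of $\sigma_2$ relative to $\sigma_1$ in the spin-factor Jordan algebra underlying $\mathcal{L}$. The quadratic formula recovers $\xi_1$ and $\xi_2$ as its two roots, so $\pi(\sigma_1,\sigma_2)$ coincides with the pair of generalized eigenvalues.

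For maximality, suppose $\pi(\sigma_1,\sigma_2) = \pi(\sigma_1',\sigma_2')$; my strategy is to reduce to a canonical representative. First I use the $\R_{++}$ factor of $G$ to rescale so that $\|\sigma_1\| = \|\sigma_1'\| = 1$; then, since $\mathrm{SO}^{\shortuparrow}(\Psi)$ acts transitively on the unit hyperboloid $\{\sigma \in \mathcal{L} : \Psi(\sigma,\sigma) = 1\}$, I may further apply an element of $G$ (diagonally to each pair) so that $\sigma_1 = \sigma_1' = e_0 := (1, \mathbf{0}) \in \R \times W$. Writing $\sigma_2 = (\mu, \mathbf{u})$ and $\sigma_2' = (\mu', \mathbf{u}')$ in this normalization, the identity $\pi(e_0,\sigma_2) = \pi(e_0,\sigma_2')$ combined with Vieta's relations ($\xi_1+\xi_2 = 2\mu$, $\xi_1\xi_2 = \mu^2 - \|\mathbf{u}\|^2$) forces $\mu = \mu'$ and $\|\mathbf{u}\| = \|\mathbf{u}'\|$. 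Any orthogonal transformation of $W$ sending $\mathbf{u}$ to $\mathbf{u}'$ then lifts to a block-diagonal element of the stabilizer of $e_0$ in $\mathrm{SO}^{\shortuparrow}(\Psi)$, and conjugating back by the normalizing elements yields the required $g \in G$ with $g(\sigma_1,\sigma_2) = (\sigma_1',\sigma_2')$.

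The main obstacle I expect is the bookkeeping in the reduction step: I must verify carefully that the stabilizer of $e_0$ inside $\mathrm{SO}^{\shortuparrow}(\Psi)$ is exactly the block-diagonal embedding of $\mathrm{O}(\Psi_W)$ (the relations $a_0 = 1$ and the $\Psi$-orthogonality of the columns force $\mathbf{a}_{0W} = \mathbf{a}_{W0} = 0$ and $A_W \in \mathrm{O}(\Psi_W)$), and that this stabilizer acts transitively on every sphere of $W$. Once that is in hand, the three steps assemble into the desired maximal invariant statement.
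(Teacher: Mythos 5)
Your proposal is correct and follows essentially the same route as the paper: invariance via degree-zero homogeneity under $G=\R_{++}\times{\mathrm SO}^{\shortuparrow}(\Psi)$, identification of $\xi_1,\xi_2$ as the roots of the characteristic polynomial $\det(\sigma_2-\ell\sigma_1)=\|\sigma_1\|^2\ell^2-2(\sigma_1\cdot\sigma_2)\ell+\|\sigma_2\|^2$ (your sign on the constant term is the correct one; the paper's displayed equation has a typo there), and maximality by using transitivity of $G$ to normalize one entry of the pair to $e$ and then invoking Vieta plus a stabilizer rotation — the only cosmetic difference being that the paper normalizes $\sigma_2=e$ where you normalize $\sigma_1$. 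One small correction to your flagged ``obstacle'': the stabilizer of $e_0$ in ${\mathrm SO}^{\shortuparrow}(\Psi)$ is the block-diagonal copy of ${\mathrm SO}(\Psi_W)$ rather than all of ${\mathrm O}(\Psi_W)$ (the determinant-one condition survives), but this still acts transitively on spheres of $W$ whenever $\dim W\geq 2$, which is the case for an irreducible Lorentz cone.
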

 \begin{proof}
 By Lemma \ref{lemma} the mapping   $\pi$ is well-defined. Next we show  that  $\xi_1\geq \xi_2$ are eigenvalues
 of    $\sigma_1$  with respect to   $\sigma_2$. Note that the characteristic polynomial of $\sigma_2$ with respect to $\sigma_1$ is 
 \begin{equation}\label{chareq}
  p(\ell):=\det(\sigma_2-\ell\sigma_1)=\|\sigma_1\|^2\ell^2-2(\sigma_1\cdot\sigma_2) \ell-\|\sigma_2\|^2.
 \end{equation}
 One then can easily check that  $\xi_1$  and   $\xi_2$  are indeed the roots of $p(\ell)$.  Next we show that $\pi$  is, moreover, onto. Let the real numbers  $r_1\geq r_2\geq 0$ be given.  Set  $\lambda :=\dfrac{r_1+r_2}{2}$. Choose a vector $ \w\in W$ such that $ \|\w\|=\dfrac{r_1-r_2}{2}$. Let  $\sigma=(\lambda, \w)$ and $e= (1,0)$. Then $\pi((\sigma, e))=(r_1,r_2)$ as desired. It is clear that  $\pi$  is invariant under  the action of  $G$. Suppose $\pi(\sigma_1,\sigma_2)=\pi(\sigma'_1,\sigma'_2)$. Since  $G$  acts transitively on  $\mathcal{L}$ and $\pi$  is invariant under the action of $G$, without loss  of generality,  we may assume  that $\sigma_2=\sigma'_2=e$.  We have
  \[
  \pi(\sigma_1,e)=\pi(\sigma'_1,e).
  \]
  Eq.\eqref{chareq} implies that the characteristic polynomials $p_1(\ell):=\|\sigma_1\|^2\ell^2-2(\sigma_1.e) \ell-1$  and  $p_2(\ell):=\|\sigma'_1\|^2\ell^2-2(\sigma'_1.e) \ell-1$  are identical.Therefore,  if    $\sigma_1=(\lambda, \mathbf{w})$  and  $\sigma'_1=(\lambda',\mathbf{u})$, then
  \begin{eqnarray*}
  \lambda&=&\sigma_1\cdot e=\sigma'_1\cdot e=\lambda'\\
  \lambda^2-\|\w\|^2&=&\|\sigma_1\|^2=\|\sigma'\|^2=\lambda'^2-\|\mathbf{u}\|^2.
  \end{eqnarray*}
   Thus  $(\sigma_1,e)$  and  $(\sigma'_1,e)$  are in the same $G$-orbit.
  \end{proof}
  \noindent
  
\subsection{Testing the hypothesis \eqref{eq:test2}}
\noindent
Now by using the maximal invariant obtained in Proposition \ref{prop:max_inv2} we can test the hypothesis \eqref{eq:test2} as follows. 
 \begin{Theorem}\label{btest}
  For the observation \ $(\tau_1,\tau_2) \in \mathcal{L}\times \mathcal{L}$,  the \textbf{ML} estimator of  $\sigma$ under $H_0$ is
\[
\widehat{\sigma}(\tau_1,\tau_2):=\dfrac{\tau_1+\tau_2}{2},
\]
 and the \textbf{LR} statistic for testing hypothesis \eqref{eq:test2} is
 \[
\left(16\prod_{j=1}^2\dfrac{\xi_j}{(1+\xi_j)^2}\right)^{\eta},
\]
  where  $\xi_1>\xi_2$  are eigenvalues of  $\tau_2$ with respect to   $\tau_1$. Furthermore, under the hypothesis $H_0$  the statistics  $\widehat{\sigma}(\tau_1,\tau_2)$  and   $\pi(\tau_1,\tau_2)=(\xi_1,\xi_2)$   are independently distributed,  $\widehat{\sigma}(\tau_1,\tau_2)\sim \mathcal{W}^{\mathcal{L}}_{\eta,\sigma }$\  and the density of $\pi(\tau_1,\tau_2)=(\xi_1,\xi_2)$  is given by

 \begin{equation*}\label{Lotrentzb}
\dfrac{(2\pi)^{n-2}}{B_{\mathcal{L}}(\eta,\eta)\Gamma_{\mathcal{L}}(n-2)}
(\xi_1-\xi_2)^2\frac{(\prod_{j=1}^2\xi_j)^{\eta-\frac{n}{2}}}{(\prod_{j=1}^2(1+\xi_j))^{2\eta}}.
\end{equation*}
  \end{Theorem}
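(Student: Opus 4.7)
\emph{ML estimator and LR statistic.} The joint log-likelihood of $(\tau_1,\tau_2)$ under $H_0$ reads, in the Jordan-algebraic notation on $\mathcal{L}$ and up to a $\sigma$-free constant,
\begin{equation*}
\ell(\sigma)=-\eta\,\tr\!\bigl(\sigma^{-1}(\tau_1+\tau_2)\bigr)-2\eta\log\det(\sigma).
\end{equation*}
Setting its Jordan-algebraic gradient to zero gives $\tau_1+\tau_2=2\sigma$, hence $\widehat\sigma=(\tau_1+\tau_2)/2$. Substituting into the ratio against the unrestricted MLEs $\widehat\sigma_i=\tau_i$, the two exponential factors cancel exactly ($e^{-4\eta}$ each, since $\tr(\tau_i^{-1}\tau_i)=2$ for rank-$2$ elements and $\tr(\widehat\sigma^{-1}(\tau_1+\tau_2))=4$), leaving $Q=\bigl(\det(\tau_1)\det(\tau_2)/\det(\widehat\sigma)^2\bigr)^{\eta}$. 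Reading $\det(\tau_2)=\det(\tau_1)\,\xi_1\xi_2$ off Eq.\eqref{chareq}, and combining with the rank-$2$ identity $\det(\tau_1+\tau_2)=\det(\tau_1)(1+\xi_1)(1+\xi_2)$ and the scaling $\det(\widehat\sigma)=\det(\tau_1+\tau_2)/4$, the claimed formula drops out.

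\emph{Independence and the marginal law of $\widehat\sigma$.} The two-sample family is a regular exponential family with natural sufficient statistic $\tau_1+\tau_2$, which is in bijection with $\widehat\sigma$; standard completeness of exponential families therefore makes $\widehat\sigma$ complete sufficient for $\sigma$. By Proposition \ref{prop:max_inv2}, $(\xi_1,\xi_2)=\pi(\tau_1,\tau_2)$ is the maximal invariant for the $G$-action \eqref{e:action}; transitivity of $G$ on $\mathcal{L}$ makes the diagonal $\{(\sigma,\sigma):\sigma\in\mathcal{L}\}$ a single $G$-orbit, so the law of $(\xi_1,\xi_2)$ under $H_0$ is free of $\sigma$ and thus ancillary. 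Basu's theorem then yields the claimed independence. The law of $\widehat\sigma$ follows from the Laplace transform $E[e^{-\tr(\theta\tau_i)}]=\det(I+\theta\sigma/\eta)^{-\eta}$: this squares (by independence of $\tau_1,\tau_2$) to give the Laplace transform of $\tau_1+\tau_2$, which after rescaling by $1/2$ identifies $\widehat\sigma$ as a Lorentz Wishart with expectation $\sigma$.

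\emph{Density of $(\xi_1,\xi_2)$.} With independence in hand, it suffices to transform a convenient joint density and divide by the (already identified) marginal of $\widehat\sigma$. I would change variables $(\tau_1,\tau_2)\mapsto(\widehat\sigma,V)$ with $V$ the Jordan-algebraic analog of $\widehat\sigma^{-1/2}\tau_1\widehat\sigma^{-1/2}$, then spectrally decompose $V=v_1c_1+v_2c_2$ along a Jordan frame $\{c_1,c_2\}\subset\mathcal{L}$. The rank-$2$ spectral coarea formula produces a Vandermonde-type Jacobian in the eigenvalues times an invariant measure on the space of Jordan frames---an $(n-2)$-sphere of unit vectors in $W$---contributing a constant proportional to $(2\pi)^{n-2}/\Gamma(n-2)$. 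The bijection $\xi_j=(2-v_j)/v_j$ (equivalently $v_j=2/(1+\xi_j)$) then converts the $(v_1,v_2)$-density to one in $(\xi_1,\xi_2)$, with the underlying beta structure becoming the advertised factor $(\xi_1\xi_2)^{\eta-n/2}/\bigl((1+\xi_1)(1+\xi_2)\bigr)^{2\eta}$, and the remaining normalizing constant is recognized as $1/B_{\mathcal{L}}(\eta,\eta)$. The hardest part will be precisely this spectral change of variables: pinning down the Lorentz-specific constants $B_{\mathcal{L}}(\eta,\eta)$ and $\Gamma_{\mathcal{L}}(n-2)$, the exact Vandermonde exponent on $(\xi_1-\xi_2)$, and all Jacobian factors. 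The qualitative picture is standard symmetric-cone analysis in the spirit of Faraut-Koranyi, but on the Lorentz cone the bookkeeping has to be done with care.
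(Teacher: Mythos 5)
The paper's own proof of this theorem is a one-line citation: it combines Proposition \ref{prop:max_inv2} with Theorem 6.1 of \cite{Bendavid2010}, where all the distributional claims are established uniformly for the irreducible symmetric cones by Jordan-algebraic methods. Your proposal instead attempts a self-contained argument, and its first two parts succeed and are arguably more illuminating: the exponential-family computation of $\widehat\sigma$ and of the likelihood ratio --- reading $\det\tau_2=\xi_1\xi_2\det\tau_1$ and $\det(\tau_1+\tau_2)=(1+\xi_1)(1+\xi_2)\det\tau_1$ off the characteristic polynomial \eqref{chareq} and using homogeneity of degree $2$ of $\det$ --- is clean and gives exactly the stated $\bigl(16\prod_j \xi_j/(1+\xi_j)^2\bigr)^{\eta}$; and the Basu-theorem route to independence (completeness of the sufficient statistic $\tau_1+\tau_2$ under $H_0$, ancillarity of $(\xi_1,\xi_2)$ via equivariance of the Wishart family under $G$ and transitivity of $G$ on $\mathcal{L}$, the latter being Proposition \ref{prop:max_inv2}) is a genuinely more elementary argument than the paper's appeal to the general symmetric-cone theorem.

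Two caveats. First, your Laplace-transform step, taken at face value, gives $\tau_1+\tau_2$ the transform $\det(I+\theta\sigma/\eta)^{-2\eta}$, so after the rescaling $\widehat\sigma=(\tau_1+\tau_2)/2$ you obtain a Lorentz Wishart with shape parameter $2\eta$ and expectation $\sigma$, not shape $\eta$ as the theorem asserts; you should either reconcile this with the statement or flag it explicitly, since under the expectation parametrization of Eq.~\eqref{Wishart} your computation is the one that is internally consistent. Second, the derivation of the joint density of $(\xi_1,\xi_2)$ is a plan rather than a proof: the rank-$2$ spectral change of variables, the exact exponent on $(\xi_1-\xi_2)$, and the constants $B_{\mathcal{L}}(\eta,\eta)$ and $\Gamma_{\mathcal{L}}(n-2)$ are precisely the content of Theorem 6.1 of \cite{Bendavid2010} that the paper declines to reprove, and you acknowledge you have not pinned them down. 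If the proof is to be self-contained you must actually carry out that coarea computation on the Lorentz cone; otherwise you are, for the density formula, in the same position as the paper --- resting on the cited external result.
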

  \begin{proof}
   This follows from Proposition \ref{prop:max_inv2} and Theorem 6.1 in \cite{Bendavid2010} when the irreducible cone $\Omega$ is the Lorentz cone $\mathcal{L}$. 
 \end{proof}
 We should mention that the proof of Theorem 6.1 \cite{Bendavid2010}, which simultaneously applies to all five types of irreducible cones, heavily rests on the analysis of simple Euclidean Jordan algebras.
 \begin{Remark}
 Recall that in the classical multivariate statistics, the Bartlett's test is testing
\begin{equation}\label{bartest}
 H_0: \Sigma=\sigma^2 I_n\quad\text{vs.}\quad H: \Sigma\neq \sigma^2 I_n,
\end{equation} 
for a Gaussian model, where the sample space is $\R^n,$ the distributions are multivariate normal distribution $\N_n(0,\Sigma)$ and the parameter space is  ${\mathrm PD}_{n}(\R)$. Therefore the Bartlett's test is testing whether $n$ univariate Gaussian distributions are independent and have the same variance $\sigma$.
 \end{Remark}
\section{Closing Remarks}
In this paper we have shown how maximal invariant statistics can be derived and used for testing two specific invariant statistical hypotheses for Lorentz Wishart models. Analogs of such hypotheses have been already studied for real, complex and quaternion type Wishart models \cite{Andersson1983}. An interesting topic of future research, which its analog has been studied in \cite{Andersson1983},  is testing the hypothesis that the scale parameter $\Sigma$  of the real Wishart distribution has a Lorentz structure. To clarify what is meant by a Lorentz type we note that every Lorenz cone $\mathcal{L}$  is isomorphic to a subcone of  ${\mathrm PD}_{n}(\R)$ for a suitable $n$ (see \cite{Faraut1994}, \cite{Jensen1988} for detail), which means that there exist a linear injection $\rho: \R\times W\rightarrow \R^{n\times n}$  such that $\rho(\lambda, \w)$ is positive definite, for each $(\lambda, \w)\in \mathcal{L}$. Therefore testing whether $\Sigma$  has a Lorentz structure requires to show that $\Sigma=\rho(\lambda, \w)$  for some $(\lambda, \w)\in \mathcal{L}$.

\bibliographystyle{plain}

\begin{thebibliography}{oo}

\bibitem{Andersson1983}
{\sc S.~Andersson, Hans~K. Brons, and Soren~T. Jensen}, {\em Distribution of eigenvalues in multivariate statistical analysis}, 
Ann. Statist., 11 (1983), pp.~392--415.

\bibitem{Andersson2004}
{\sc S.~A. Andersson and G.~G. Wojnar}, {\em Wishart distributions on homogeneous cones}, J. Theor. Probab., 17 (2004), pp:~781--818.

\bibitem{Bendavid2010}
{\sc E.~Ben-David}, {\em Some hypothesis tests for Wishart models on symmetric cones}, Contemp. Math., 516 (2010), pp.~327Ð345.

\bibitem{Bendavid2008}
{\sc E.~Ben-David}, {\em Some hypothesis tests for Wishart models on symmetric cones}, Thesis (Ph.D.)--Indiana University (2008). 

\bibitem{Casalis1996}
{\sc M.~ Casalis and G.~ Letac}, {\em The Lukacs-Olkin-Rubin characterization of Wishart distributions on symmetric cones}, Ann. Statist. 24 (1996),  pp.~763--786. 

\bibitem{Faraut1994}
{\sc Jacques Faraut and Adam Korany}, {\em Analysis on symmetric cones}, Oxford University Press, (1994).

\bibitem{Feinsilver2001}
{\sc P.~Feinsilver, J.~Kocik, and M.~Giering},
{\em Canonical variables and analysis on $so(n, 2)$}, J. Phys. A: Math. Gen., 34 (2001), pp.~2367--2376.

\bibitem{Jensen1988}
{\sc Soren~T. Jensen}, {\em Covariance hypothesis witch are linear in both the covariance and the  inverse covariance}, Ann. Statist., 6 (1988), pp.~302--322.
  
\bibitem{Konno2007}
{\sc Yoshihiko Konno}, {\em Estimation of normal covariance matrices parametrized by irreducible symmetric cones under steins loss}, J. Multivar. Anal., 98 (2007), pp.~295--316.

\bibitem{Letac2004}
{\sc G.~ Letac and H.~ Massam},  {\em All invariant moments of the Wishart distribution},  Scand. J. Stat.,  31 (2004),  pp.~295--318. 

\bibitem{Muirhead1982}
{\sc  Robb J.~ Muirhead}, {\em Aspects Of Multivariate Statistical Theory}, Wiley, New York, (1982).


\end{thebibliography}

\end{document}